\newtheorem{thm}{Theorem}[section]
\newtheorem{prop}{Proposition}[section]
\let\c@prop\c@thm\makeatother
\newtheorem*{prop*}{Proposition}
\newtheorem{lem}{Lemma}[section]
\let\c@lem\c@thm\makeatother
\newtheorem*{thmA}{Theorem A}
\newtheorem*{thmB}{Theorem B}
\theoremstyle{definition}
\newtheorem{defn}{Definition}[section]
\let\c@defn\c@thm\makeatother
\theoremstyle{remark}
\newtheorem*{rmk}{Remark}
\newcommand{\cA}{\mathcal{A}}
\newcommand{\cB}{\mathcal{B}}
\newcommand{\cC}{\mathcal{C}}
\newcommand{\cD}{\mathcal{D}}
\newcommand{\cP}{\mathcal{P}}
\newcommand{\cS}{\mathcal{S}}
\newcommand{\cT}{\mathcal{T}}
\newcommand{\cW}{\mathcal{W}}
\newcommand{\cat}{\cC\!\mathit{at}}
\renewcommand{\top}{\cT\!\mathit{op}}
\newcommand{\set}{\cS\!\mathit{et}}
\newcommand{\sset}{\mathit{s}\set}
\newcommand{\gcat}{G\cat}
\newcommand{\gtop}{G\top}
\newcommand{\gset}{G\set}
\newcommand{\gsset}{G\sset}
\newcommand{\gc}{G\cC}
\newcommand{\gd}{G\cD}
\newcommand{\og}{\mathscr{O}_G}
\newcommand{\ocat}{\og\textrm{-}\cat}
\newcommand{\otop}{\og\textrm{-}\top}
\newcommand{\osset}{\og\textrm{-}\sset}
\newcommand{\oc}{\og\textrm{-}\cC}
\newcommand{\od}{\og\textrm{-}\cD}
\newcommand{\bg}{BG}
\newcommand{\ob}{\text{ob}}
\DeclareMathOperator{\colim}{colim}
\DeclareMathOperator{\id}{id}
\begin{document}
\title{A model structure on $\gcat$}

\author[A.M. Bohmann]{Anna Marie Bohmann}
\address{Department of Mathematics, Northwestern University, Evanston, IL 60208, USA}
\email{bohmann@math.northwestern.edu}
\author[K. Mazur]{Kristen Mazur}
\address{Department of Mathematics, Lafayette College, Easton, PA 18042, USA}
\email{mazurk@lafayette.edu}
\author[A. Osorno]{Ang\'{e}lica M. Osorno}
\address{Department of Mathematics, Reed College, Portland, OR 97202, USA}
\email{aosorno@reed.edu}
\author[V. Ozornova]{Viktoriya Ozornova}
\address{Fachbereich Mathematik, Universit\"{a}t Bremen, 28359 Bremen, Germany}
\email{ozornova@math.uni-bremen.de} 
\author[K. Ponto]{Kate Ponto}
\address{Department of Mathematics, University of Kentucky, Lexington, KY 40506, USA}
\email{kate.ponto@uky.edu}
\author[C. Yarnall]{Carolyn Yarnall}
\address{ Department of Mathematics, Wabash College, Crawfordsville, IN 47933, USA}
\email{yarnallc@wabash.edu}
\subjclass[2010]{Primary 55P91; Secondary 18G55}

\begin{abstract}
We define a model structure on the category $\gcat$ of small categories with an action by a discrete group $G$  by 
lifting the Thomason model structure on $\cat$.  We show 
 there is a Quillen equivalence between $\gcat$ with this model structure and $\gtop$ with the standard model structure.
\end{abstract}

\maketitle

\section*{Introduction}

There are familiar adjunctions 
\[\xymatrix{
\cat \ar[r]<1ex>^{N}& \sset  \ar[l]<1ex>^{c} \ar[r]<-1ex>_{|-|} &  \top \ar[l]<-1ex>_{S_{\bullet}(-)} 
}\]
between the categories of categories, simplicial sets, and topological spaces, and for the standard model structure on 
$\sset$ and the Quillen model structure on $\top$ 
the adjunction on the right is a Quillen equivalence.  
In \cite{thomason} Thomason defined a model structure on $\cat$ and showed that the adjunction 
\[\xymatrix{
\cat \ar[r]<1ex>^{Ex^2N}& \sset  \ar[l]<1ex>^{cSd^2} 
}\]
is a Quillen equivalence.
In Thomason's model structure a functor 
$F\colon \cA\to \cB$  is a weak equivalence if $Ex^2N(F)$ is a weak equivalence in $\sset$ or, equivalently, $BF$ is a weak equivalence of 
topological spaces.
A functor $F$  is a fibration if $Ex^2N(F)$ is a  fibration in $\sset$.  As shown in \cite{FP}, this model structure  is cofibrantly generated.

In this paper we use results by Stephan \cite{stephanthesis} to extend Thomason's model structure to the category of categories with  an action by a discrete group $G$.
We let $\bg$ be the category with one object and endomorphisms given by the group $G$ and 
define the \textbf{category of $G$ objects} in a category $\cC$, denoted by $\gc$,  to be 
the category of functors \[\bg\to \cC\] and natural transformations. 

\begin{rmk}
Explicitly, an object of $\gc$ is an object $C$ of $\cC$ along with \emph{isomorphisms} $\sigma_g\colon C\to C$
so that $\sigma_g\sigma_h=\sigma_{gh}$ and $\sigma_e=\id$.
As an alternative, we could consider $G$ objects where the morphisms $\sigma_g$ are equivalences and the group identities hold up to 
natural isomorphism.  This corresponds to a pseudofunctor from $\bg$ to the 2-category of categories and would define group actions up to homotopy after passing to topological spaces. 
Since our primary interest is in the comparison between $\gcat$ and $\gtop$ we will only consider strict actions. 
\end{rmk}

If $\cC$ is a model category we can define a model structure on $\gc$ where the fibrations and weak equivalences 
are maps that are fibrations or weak equivalences in $\cC$.
Unfortunately, this perspective does not capture the desired homotopy theory.  
This is perhaps most familiar in the case of $\gtop$, where the desired notion of $G$-weak equivalence 
is a map that induces a non-equivariant weak equivalence on fixed point spaces for all subgroups of $G$. 

Given a subgroup $H$ of $G$, we have a functor $(-)^H\colon \gc\to \cC$ defined by $X^H= \lim_{BH}X$.  This notion coincides with the usual definition of the 
fixed point functor in the case that $\cC$ is any of $\set$, $\top$, $\sset$ or $\cat$. In the case of $\cat$, $C^H$ is the subcategory of $C$ consisting of those objects and morphisms fixed by all $h\in H$.
Let $\og$ be the orbit category of $G$; it has objects the orbits $G/H$ for all subgroups $H$ and morphisms all equivariant maps. Then
an object $X\in \gc$ defines a functor 
\[\Phi(X)\colon\og^{op}\to \cC\]
by $\Phi(X)(G/H)=X ^H$.
 If we let 
$\oc$ be the category of functors  \[\og^{op}\to \cC\]
  we can define a
functor $\Phi\colon \gc \to  \oc$  as above.
 The functor $\Phi$ has a left adjoint \[\Lambda\colon \oc \to \gc,\] defined by $\Lambda (Y)=Y(G/e)$, where the $G$-action is inherited from the automorphisms of the object $G/e$ in $\og$.

If $\cC$ is a cofibrantly generated model category, such as $\top$, $\sset$ or Thomason's model structure on $\cat$, 
there is a model structure on $\oc$ where the fibrations and weak equivalences 
are defined levelwise. This is 
the \textbf{projective model structure} on the category $\oc$. For the category of topological spaces, or simplicial sets,  this model structure captures the desired equivariant homotopy type.

For some categories $\cC$ we can use the functor $\Phi$ to lift the projective model structure from $\oc$ to $\gc$. Then a map in $\gc$ is a 
fibration or weak equivalence if it is one after applying $\Phi$.  In the case of topological spaces this is the 
usual model structure on $\gtop$ \cite[III.1.8]{MM}. In \cite{elmendorf}, 
Elmendorf constructed a functor $\otop \to \gtop$ that was an inverse of $\Phi$ up to homotopy, thus showing that the homotopy categories of $\gtop$ and $\otop$ were equivalent. Later Piacenza \cite{piacenza} showed that the adjunction given by $\Phi$ and $\Lambda$ is a Quillen equivalence if $\gtop$ has this model structure and $\otop$ has the 
projective model structure. Note that Elmendorf's functor can be thought of as the composition of the cofibrant replacement in $\otop$ followed by $\Lambda$.

In this paper we prove a similar result for $\cat$.

\begin{thmA}\label{thm:main}  If $G$ is a  discrete group there is a model structure on $\gcat$ where a functor is a fibration or weak equivalence if it is so after applying $\Phi$. Using this model structure 
  the $\Lambda\text{--}\Phi$ adjunction is  a Quillen 
  equivalence between $\gcat$ and $\ocat$. 
\end{thmA}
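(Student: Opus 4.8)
The plan is to obtain the model structure on $\gcat$ by transferring the projective model structure on $\ocat$ across the adjunction $\Lambda\dashv\Phi$, and to recognize this transfer as an instance of Stephan's machine \cite{stephanthesis} for equivariant diagram categories. Since $\gcat=[\bg,\cat]$ and $\cat$ is bicomplete, $\gcat$ is bicomplete, with limits and colimits computed in $\cat$ and equipped with the induced $G$-action; in particular $\Phi=\{(-)^H\}_H$ and its left adjoint $\Lambda$ exist as described. The first step is to identify the candidate generators. The projective model structure on $\ocat$ is generated by the sets $\{F_{G/H}i\}$ and $\{F_{G/H}j\}$, where $i$ and $j$ range over the generating cofibrations and generating acyclic cofibrations of Thomason's model structure on $\cat$ (which is cofibrantly generated by \cite{FP}), $F_{G/H}$ is left adjoint to evaluation at $G/H$, and the index $H$ runs over all subgroups. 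Applying $\Lambda$ and using $\og(G/e,G/H)\cong G/H$ as $G$-sets, one computes $\Lambda F_{G/H}(i)\cong (G/H)\cdot i$, the copower of $i$ by the $G$-set $G/H$; these, together with the maps $(G/H)\cdot j$, are the proposed generating (acyclic) cofibrations for $\gcat$.

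To run the transfer I would verify the hypotheses of the lifting theorem for cofibrantly generated model categories. Bicompleteness is settled above, and since $\cat$ is locally presentable the domains of the generators are small, so the small object argument applies. The essential point is the acyclicity condition: every relative $\{(G/H)\cdot j\}$-cell complex must become a levelwise Thomason weak equivalence after applying $\Phi$. Here the basic computation is that fixed points split a copower along the orbit combinatorics of $\og$, since $G$ merely permutes the summands of $(G/H)\cdot A$ and $\og(G/K,G/H)\cong (G/H)^{K}$:
\[
\big((G/H)\cdot A\big)^{K}\;\cong\;\coprod_{(G/H)^{K}}A\;\cong\;\og(G/K,G/H)\cdot A.
\]
In particular $\Phi\big((G/H)\cdot j\big)$ is, at each orbit $G/K$, a coproduct of copies of the acyclic cofibration $j$, hence again an acyclic cofibration in $\cat$.

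The main obstacle is to promote this single-cell computation to arbitrary cell complexes: each $(-)^{K}$ is a limit, and so does not commute with the pushouts and transfinite composites used to build cell complexes, and pushouts in $\cat$ are themselves delicate. It is therefore not automatic that $\Phi$ carries a relative $\{(G/H)\cdot j\}$-cell complex to a levelwise acyclic cofibration. This is exactly the content of Stephan's cellularity condition, and verifying it for Thomason's generators in $\cat$ is where the real work lies: one must check that a pushout of a generating cell $(G/H)\cdot i$ has $K$-fixed points computed as the corresponding pushout along a coproduct of copies of $i$ in $\cat$, so that $(-)^{K}$ preserves the relevant pushout squares and transfinite composites. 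Granting this, $\Phi$ sends $\{(G/H)\cdot j\}$-cell complexes to levelwise transfinite composites of pushouts of coproducts of the $j$, hence to levelwise acyclic cofibrations, and the transfer theorem produces the model structure in which, by construction, $f$ is a fibration or weak equivalence precisely when $\Phi f$ is.

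For the Quillen equivalence, the adjunction $\Lambda\dashv\Phi$ is Quillen since $\Phi$ creates the fibrations and weak equivalences of $\gcat$; in particular $\Phi$ both preserves and reflects all weak equivalences. It then suffices to show that the unit $\eta_Y\colon Y\to\Phi\Lambda Y$ is a weak equivalence for every cofibrant $Y\in\ocat$, since the derived unit agrees with $\eta_Y$ up to applying $\Phi$ to a fibrant replacement, which $\Phi$ sends to a weak equivalence. On a generating cell the fixed-point computation above gives $\Phi\Lambda F_{G/H}(i)(G/K)\cong\og(G/K,G/H)\cdot i\cong F_{G/H}(i)(G/K)$, so $\eta$ is an isomorphism on generators; the same commutation of fixed points with cellular colimits that settles acyclicity then upgrades this to a levelwise Thomason weak equivalence on all cofibrant objects. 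With $\Phi$ reflecting weak equivalences and the derived unit a weak equivalence on cofibrant objects, the standard criterion yields that $\Lambda\dashv\Phi$ is a Quillen equivalence between $\gcat$ and $\ocat$.
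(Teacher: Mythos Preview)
Your outline follows the same route as the paper: invoke Stephan's machinery to transfer the projective model structure on $\ocat$ across $\Lambda\dashv\Phi$, with the crux being that each fixed point functor $(-)^K$ is cellular in the sense required for the transfer and for the Quillen equivalence. You also correctly isolate the three ingredients (the copower splitting $((G/H)\cdot A)^K\cong (G/H)^K\cdot A$, commutation of $(-)^K$ with the pushouts attaching generating cells, and with the transfinite composites).

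The gap is that you do not establish any of these commutation statements; you write ``verifying it for Thomason's generators in $\cat$ is where the real work lies'' and then ``Granting this\ldots''. That verification is the entire content of the theorem, and it is not formal: pushouts in $\cat$ are badly behaved in general, and there is no abstract reason for a limit like $(-)^K$ to commute with them. The paper carries out exactly the two checks you defer. For pushouts, it uses that Thomason's generating (acyclic) cofibrations are Dwyer maps between posets, and invokes the explicit Fritsch--Latch description of a pushout along a Dwyer map $\cA\hookrightarrow\cB$: the new hom-sets are controlled by the retraction $r\colon\cW\to\cA$, and from this one can read off directly that applying $(-)^K$ to the pushout square along $G/K\times\cA\to G/K\times\cB$ yields the pushout square along $(G/K)^K\times\cA\to (G/K)^K\times\cB$. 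For filtered colimits along underlying monomorphisms, the paper reduces to $\sset$ via the fully faithful nerve (which preserves filtered colimits and, as a right adjoint, commutes with fixed points), and then uses that in $G\set$ fixed points commute with filtered colimits of injections. Without these concrete arguments your acyclicity step, and hence the transfer, does not go through.

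A minor point: once cellularity is verified, Stephan's theorem already gives the Quillen equivalence directly, so your separate unit argument (while correct in spirit) is unnecessary and in any case rests on the same unproved commutation of $\Phi$ with cellular colimits.
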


More can be said about this model structure.  
Since $\oc$ and $\gc$ are both diagram categories, an adjunction $L\colon \cC\rightleftarrows \cD\colon R$ defines adjunctions 
\[L_*\colon \oc\rightleftarrows \od\colon R_*\text{ and } L_*\colon \gc\rightleftarrows \gd\colon R_*\]
and so the classical adjunctions relating $\cat$, $\sset$, and $\top$ define adjunctions 
\[\xymatrixrowsep{2cm} 
\xymatrixcolsep{2cm} \xymatrix{
\gcat \ar[r]<1ex>^{Ex^2N} \ar[d]<1ex>^{\Phi} & \gsset \ar[d]<1ex>^{\Phi } \ar[l]<1ex>^{cSd^2} \ar[r]<-1ex>_{|-|} &  \gtop \ar[d]<1ex>^{\Phi} \ar[l]<-1ex>_{S_{\bullet}(-)} \\
\ocat \ar[r]<1ex>^{Ex^2N} \ar[u]<1ex>^{\Lambda} & \osset  \ar[l]<1ex>^{cSd^2}\ar[r]<-1ex>_{|-|} \ar[u]<1ex>^{\Lambda} & \otop.  \ar[l]<-1ex>_{S_{\bullet}(-)} \ar[u]<1ex>^{\Lambda}
} \]
The usual Quillen equivalences between $\cat$, $\sset$ and $\top$ are known to  induce Quillen equivalences between $\ocat$, $\osset$ and $\otop$.

\begin{thmB}
The adjunctions in the top row of the diagram above are Quillen equivalences. 
\end{thmB}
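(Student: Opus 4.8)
The plan is to deduce Theorem B from the Quillen equivalences already available: the vertical $\Lambda$--$\Phi$ adjunctions (Theorem A for $\cat$, and the analogues for $\sset$ and $\top$, the latter due to Piacenza \cite{piacenza}) together with the bottom row of orbit-category Quillen equivalences. The structural observation that drives everything is that the two squares in the displayed diagram commute strictly on the level of right adjoints. Indeed, each of $\Phi$, $Ex^2N$, and $S_\bullet$ is a right adjoint and hence preserves limits; since $\Phi(X)(G/H)=X^H=\lim_{BH}X$, the functors $Ex^2N$ and $S_\bullet$ commute with the fixed-point functors, giving $\Phi\circ Ex^2N = Ex^2N\circ\Phi$ and $\Phi\circ S_\bullet = S_\bullet\circ\Phi$ as functors out of $\gcat$ and $\gtop$ respectively.

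Next I would verify that the top adjunctions are Quillen adjunctions. By construction, a map in any of $\gcat$, $\gsset$, $\gtop$ is a fibration (resp. acyclic fibration) exactly when its image under $\Phi$ is one in the corresponding orbit category. So, given a fibration (resp. acyclic fibration) $f$ in $\gcat$, the map $\Phi(f)$ is a fibration (resp. acyclic fibration) in $\ocat$; applying the right Quillen functor $Ex^2N\colon\ocat\to\osset$ and using the strict commutativity, $\Phi(Ex^2N(f))=Ex^2N(\Phi(f))$ is a fibration (resp. acyclic fibration) in $\osset$, whence $Ex^2N(f)$ is a fibration (resp. acyclic fibration) in $\gsset$. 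Thus $Ex^2N$ is right Quillen, and the identical argument with $S_\bullet$ in place of $Ex^2N$ handles the second adjunction.

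Finally I would upgrade these to Quillen equivalences by a two-out-of-three argument on homotopy categories. Because every functor in sight is right Quillen, and therefore preserves fibrant objects, each composite of total right derived functors is computed by a single fibrant replacement followed by the underived, strictly commuting functors; hence the square of right derived functors commutes up to natural isomorphism. In this commuting square both copies of $\mathbb{R}\Phi$ and the bottom $\mathbb{R}Ex^2N$ are equivalences of homotopy categories, by the vertical and bottom-row Quillen equivalences, so the composite $\mathbb{R}\Phi\circ\mathbb{R}Ex^2N=\mathbb{R}Ex^2N\circ\mathbb{R}\Phi$ (the second a composite of two equivalences) is an equivalence; cancelling the equivalence $\mathbb{R}\Phi$ on the $\gsset$ side shows $\mathbb{R}Ex^2N\colon\mathrm{Ho}(\gcat)\to\mathrm{Ho}(\gsset)$ is an equivalence, i.e.\ $cSd^2\dashv Ex^2N$ is a Quillen equivalence. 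The same reasoning with $S_\bullet$ gives the Quillen equivalence $|-|\dashv S_\bullet$.

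The step I expect to require the most care is the commutativity of the derived square, since in general a composite of derived functors need not coincide with the derived composite. Here this is rescued entirely by fibrant-object preservation: all four functors are right Quillen, so I can compute every derived functor with the same fibrant replacement, reducing the derived identity to the strict one established at the outset. A secondary point worth recording explicitly is that the fibrations and weak equivalences in $\gcat$, $\gsset$, and $\gtop$ are all created by $\Phi$, which is precisely what lets the orbit-level Quillen conditions propagate upward to the $G$-equivariant level.
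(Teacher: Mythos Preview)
Your proposal is correct and follows essentially the same route as the paper's proof (given as \autoref{thm:expandedmain}): establish that $R_*$ commutes with $\Phi$ because right adjoints preserve limits, deduce that $R_*$ is right Quillen since fibrations and acyclic fibrations are created by $\Phi$, and then use the commuting square of Quillen adjunctions together with the three known Quillen equivalences to conclude. The only cosmetic difference is that the paper invokes the $2$-out-of-$3$ property for Quillen equivalences directly \cite[Corollary~1.3.15]{hovey}, whereas you unwind this at the level of total right derived functors; these are the same argument.
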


\subsection*{Acknowledgments}
We would like to thank the organizers of the Women in Topology Workshop for providing a wonderful opportunity for collaborative research. Thanks to Peter May for suggesting this question, to Emily Riehl for generously sharing her category theory insights, and to Marc Stephan for suggesting further generalizations. Many thanks to Teena Gerhardt for her help with the early stages of this project. The fourth author would like to thank SFB 647 Space--Time--Matter for supporting her travel to the workshop. The fifth author was partially supported by NSF grant DMS-1207670.  Finally we would like to thank the Banff International Research Station and the Clay Mathematics Institute for supporting the workshop.

\section{Model structures on $G$-categories}\label{sec:contex}

Let $\cC $ be a cofibrantly generated model category. To lift the model structure from $\cC$ to the category $G\cC$ we need some compatibility between 
the model structure on $\cC$ and the group action.  The relevant notion of compatibility is captured using the 
fixed point functors.

\begin{defn}\label{defn:cellular}
A fixed point functor  $(-)^H\colon \gc \to \cC$ is \textbf{cellular} if 
\begin{enumerate}
  \item it preserves directed colimits of diagrams where each arrow is a non-equivariant cofibration after applying the forgetful functor $\gc\to \cC$,
  \item it preserves pushouts of diagrams where one leg is given by 
  \[ G/K\otimes f\colon G/K\otimes A\to G/K\otimes B\]
  for some closed subgroup $K$ of $G$ and a cofibration $f\colon A\to B$ in $\cC$, and 
  \item for any closed subgroup $K$ of $G$ and any object $A$ of $\cC$ the induced map \[(G/K)^H\otimes A\to (G/K\otimes A)^H\] is an isomorphism in $\cC.$
\end{enumerate}
\end{defn}
Note that since $\cC$ is cocomplete, for a $G$-set $X$ and an object $A$ of $\cC$ we have the categorical tensor $X\otimes A$ which is the $G$-object $\coprod_X A$ with $G$-action induced by the $G$-action on $X$.

In \cite{stephanthesis}, Stephan gives conditions to lift a model structure from $\oc$ to $\gc$.
\begin{thm} \cite[Proposition 2.6, Lemma 2.9]{stephanthesis} \label{thm:stephanlifting}
  Let $G$ be a discrete group, 
   $\cC$ be a model category which is cofibrantly generated and assume
    for any subgroup $H\leq G$ the $H$-fixed point functor $(-)^H\colon \gc\to \cC$ is cellular.  Then there is a \textbf{fixed point model structure} on $\gc$ where a map $f$ in $\gc$ is a fibration or weak equivalence if and only if $\Phi(f)$ is a  fibration or weak
  equivalence in the projective model structure on $\oc$.
  Additionally, there is a Quillen equivalence
  \[ \Lambda\colon \oc\rightleftarrows \gc:\!\Phi\]
  between $\oc$ with the projective model structure and $\gc$ with this model structure. 
\end{thm}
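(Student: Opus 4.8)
The plan is to recognize the asserted structure on $\gc$ as the right-induced (transferred) model structure along the adjunction $\Lambda\dashv\Phi$, with fibrations and weak equivalences created by the right adjoint $\Phi$, and then to verify the hypotheses of the standard transfer criterion for cofibrantly generated model categories. Since $\cC$ is cofibrantly generated, the projective model structure on $\oc$ is cofibrantly generated as well, with generating cofibrations $F_{G/H}(i)$ and generating acyclic cofibrations $F_{G/H}(j)$, where $i$ and $j$ range over the generating cofibrations and generating acyclic cofibrations of $\cC$, $H$ ranges over the subgroups of $G$, and $F_{G/H}$ denotes the left adjoint to evaluation at $G/H$. A direct computation identifies $\Lambda F_{G/H}(A)$ with $G/H\otimes A$: indeed $\Lambda(Y)=Y(G/e)$ and $F_{G/H}(A)(G/e)=\og(G/e,G/H)\otimes A\cong (G/H)\otimes A$ with the expected $G$-action. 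Hence the candidate generating cofibrations and generating acyclic cofibrations of $\gc$ are the maps $G/H\otimes i$ and $G/K\otimes j$, precisely the maps appearing in \autoref{defn:cellular}.

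First I would record the formal hypotheses of the transfer theorem. The category $\gc$ is bicomplete because $\cC$ is, and $\Phi$ is the right adjoint of $\Lambda$. The smallness needed to run the small object argument with the two candidate generating sets follows from the corresponding smallness in $\cC$ together with condition (1) of \autoref{defn:cellular}, which guarantees that $\Phi$ commutes with the directed colimits appearing in the argument. Granting this, the transfer criterion reduces the whole construction to a single acyclicity statement: every relative cell complex built from the maps $G/K\otimes j$ is sent by $\Phi$ to a weak equivalence in $\oc$, that is, to a levelwise weak equivalence in $\cC$.

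Verifying this acyclicity condition is the main obstacle, and it is exactly where the cellularity hypotheses are consumed. Fix a subgroup $H$ and a relative cell complex $X_0\to X$, presented as a transfinite composite of pushouts of maps of the form $G/K\otimes j$. I would apply $(-)^H$ and analyze the outcome stage by stage. Condition (3) of \autoref{defn:cellular} identifies $(G/K\otimes j)^H$ with $(G/K)^H\otimes j$, a coproduct of copies of the acyclic cofibration $j$ indexed by the fixed set $(G/K)^H$; since acyclic cofibrations are closed under coproducts, this is again an acyclic cofibration in $\cC$. Condition (2) ensures that $(-)^H$ carries each cell-attachment pushout in $\gc$ to a pushout in $\cC$, so that every stage remains an acyclic cofibration, and condition (1) ensures that $(-)^H$ preserves the transfinite composite. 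Thus $(X_0\to X)^H$ is an acyclic cofibration, in particular a weak equivalence, in $\cC$ for every $H$, so $\Phi(X_0\to X)$ is a projective weak equivalence. This establishes acyclicity, and the transfer theorem then produces the fixed point model structure on $\gc$ and makes $\Lambda\dashv\Phi$ a Quillen adjunction.

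It remains to upgrade this to a Quillen equivalence. By construction $\Phi$ both preserves and reflects weak equivalences, so $\Phi$ applied to any fibrant replacement map is a weak equivalence, and the derived unit at a cofibrant object $Y$ is a weak equivalence if and only if the unit $\eta_Y\colon Y\to\Phi\Lambda Y$ is. I would check $\eta_Y$ on the generators: for $Y=F_{G/H}(A)$ condition (3) gives $(\Phi\Lambda Y)(G/K)=(G/H\otimes A)^K\cong (G/H)^K\otimes A$, while $Y(G/K)=\og(G/K,G/H)\otimes A\cong (G/H)^K\otimes A$ via the standard identification $\og(G/K,G/H)\cong (G/H)^K$, and $\eta_Y$ is exactly this isomorphism. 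Since $\Lambda$ preserves all colimits and conditions (1) and (2) guarantee that $\Phi$ preserves the colimits used to assemble a cell complex, $\eta$ remains an isomorphism on every cell complex built from the maps $G/H\otimes i$, hence a weak equivalence on every cofibrant $Y$ after passing to retracts. Together with the fact that $\Phi$ reflects weak equivalences, the standard criterion for Quillen equivalences then shows that $\Lambda\dashv\Phi$ is a Quillen equivalence.
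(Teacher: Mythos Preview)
The paper does not prove this statement: it is quoted from Stephan's thesis and carries the citation \cite[Proposition 2.6, Lemma 2.9]{stephanthesis} in lieu of a proof. So there is no in-paper argument to compare against.

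That said, your outline is the standard way to establish such a result and is essentially what Stephan does. The identification $\Lambda F_{G/H}(A)\cong G/H\otimes A$ is correct, and from it the candidate generating sets in $\gc$ are exactly the maps $G/H\otimes i$ and $G/H\otimes j$; the three cellularity conditions are then used precisely as you describe to run the small object argument and to verify the acyclicity clause of the transfer theorem. Your smallness argument is also right: by adjunction $\gc(G/H\otimes A,X)\cong\cC(A,X^H)$, and condition~(1) guarantees that $(-)^H$ commutes with the relevant directed colimits, so smallness of $A$ in $\cC$ yields smallness of $G/H\otimes A$ in $\gc$.

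For the Quillen equivalence, your argument that the underived unit is an isomorphism on cellular objects of $\oc$ is clean and correct: the base case is condition~(3) together with the bijection $\og(G/K,G/H)\cong (G/H)^K$, and the inductive step uses that $\Lambda$ preserves all colimits while, by conditions~(1) and~(2), $\Phi$ preserves the particular pushouts and transfinite compositions arising in a cell complex built from the maps $G/H\otimes i$. One small remark: to conclude for \emph{all} cofibrant $Y$ you pass to retracts; this is fine because isomorphisms are closed under retracts, so $\eta_Y$ is still an isomorphism, not merely a weak equivalence, on any retract of a cell complex. Combined with the fact that $\Phi$ creates weak equivalences, this gives the Quillen equivalence.
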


 Reflecting the hypothesis of this theorem, for the rest of this section we assume that $G$ is a discrete group.

This theorem can be made functorial with respect to Quillen adjunctions.  

\begin{thm}\label{thm:expandedmain}
  Let $\cC$ and $\cD$ be cofibrantly generated model categories 
  satisfying the hypotheses of \autoref{thm:stephanlifting}.
  If \[L\colon \cC \rightleftarrows \cD :\!R\]
  is a Quillen adjunction (resp. Quillen equivalence)  then there is an induced Quillen adjunction (resp. Quillen equivalence)
  \[L_*\colon \gc \rightleftarrows \gd :\! R_*\]
  where $\gc$ and $\gd$ have fixed point model structures.  
\end{thm}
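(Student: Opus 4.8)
The plan is to route everything through the fixed-point/projective comparison $\Lambda \dashv \Phi$ supplied by \autoref{thm:stephanlifting}, reducing the claim about $\gc$ and $\gd$ to the already-understood behavior of the prolonged adjunction $L_* \dashv R_*$ on the projective diagram categories $\oc$ and $\od$, and then finishing with a two-out-of-three argument for Quillen equivalences. The structural key is that $\Phi$ intertwines the prolongations of $L \dashv R$ on the two sides.

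First I would pin down the induced functors. Since $\gc$ and $\gd$ are functor categories out of $\bg$, the adjunction $L \dashv R$ prolongs by postcomposition to $L_*(X) = L \circ X$ and $R_*(Y) = R \circ Y$, and the same recipe produces $L_* \dashv R_*$ on $\oc$ and $\od$. I would then check that $\Phi$ commutes with these prolongations: for $Y \in \gd$ and any subgroup $H \le G$,
\[
\Phi(R_* Y)(G/H) = (R\circ Y)^H = \lim_{BH}(R\circ Y) \cong R\bigl(\lim_{BH} Y\bigr) = R(Y^H) = R_*(\Phi Y)(G/H),
\]
the middle isomorphism holding because $R$, as a right adjoint, preserves the limit over $BH$ defining the $H$-fixed points. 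Verifying naturality in $G/H$ as well yields a natural isomorphism $\Phi R_* \cong R_* \Phi$ of functors $\gd \to \oc$ (and dually $\Lambda L_* \cong L_* \Lambda$).

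Next I would dispatch the Quillen adjunction statement. Since $R$ is right Quillen, its prolongation $R_*\colon \od \to \oc$ preserves fibrations and acyclic fibrations in the projective model structures, as these are detected objectwise. Because fibrations and weak equivalences in the fixed-point structures on $\gc$ and $\gd$ are by definition created by $\Phi$, the identity $\Phi R_* \cong R_* \Phi$ shows immediately that $R_*\colon \gd \to \gc$ preserves fibrations and acyclic fibrations, so $L_* \dashv R_*$ is a Quillen adjunction. For the equivalence statement, assume $L \dashv R$ is a Quillen equivalence. Then its prolongation $L_* \dashv R_*$ on the projective diagram categories $\oc \rightleftarrows \od$ is again a Quillen equivalence — this is the standard fact that forming projective diagram categories preserves Quillen equivalences, i.e. the same input already used for the bottom rows of the diagram in the introduction. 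By \autoref{thm:stephanlifting}, both $\Phi\colon \gc \to \oc$ and $\Phi\colon \gd \to \od$ are right adjoints of Quillen equivalences. Hence the composite $\Phi \circ R_* \cong R_* \circ \Phi\colon \gd \to \oc$ is a Quillen equivalence, being a composite of the two Quillen equivalences $\Phi\colon \gd \to \od$ and $R_*\colon \od \to \oc$. Applying two-out-of-three for Quillen equivalences to the composable right Quillen functors $\gd \xrightarrow{R_*} \gc \xrightarrow{\Phi} \oc$, with $\Phi$ and the composite $\Phi \circ R_*$ both Quillen equivalences, forces $R_*\colon \gd \to \gc$ to be a Quillen equivalence as well.

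The routine work is the naturality check making $\Phi R_* \cong R_* \Phi$ an isomorphism of $\og^{op}$-diagrams rather than merely a levelwise isomorphism, and the objectwise verification that $R_*$ is right Quillen on the projective structures. The one genuinely nonformal ingredient is that prolonging a Quillen equivalence to projective diagram categories again gives a Quillen equivalence; once that is in hand, everything else follows formally from two-out-of-three and Stephan's comparison, so I expect this diagram-category statement to be the main obstacle, to be either cited or established directly by checking the derived unit and counit objectwise.
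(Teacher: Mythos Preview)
Your proposal is correct and follows essentially the same route as the paper: establish $\Phi R_* \cong R_* \Phi$ via $R$ preserving limits, use this together with $R_*$ being right Quillen on the projective diagram categories (the paper cites \cite[Theorem 11.6.5]{hirschhorn}) to conclude $R_*$ is right Quillen on the fixed point structures, and then apply two-out-of-three for Quillen equivalences. The only cosmetic difference is that the paper organizes the two-out-of-three step as a commuting square of Quillen adjunctions with three known equivalent sides, whereas you phrase it via the composite $\Phi \circ R_* \cong R_* \circ \Phi$; these are the same argument.
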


\begin{proof}
  To show we have a Quillen adjunction 
  it is enough to show that $R_*\colon \gd\to \gc$ is a right Quillen functor, that is,   
  to show that $R_*$ preserves fibrations and acyclic fibrations. We will show $R_*$ preserves fibrations; the case for acyclic fibrations is similar.

  Let $f\colon X\to Y$ be a fibration in $\gd$. Since $\gd$ has the fixed point model structure, 
  fibrations are created in $\od$.  
  Thus $\Phi f\colon \Phi X\to \Phi Y$ is also a fibration in $\od$. 

 By assumption, $R\colon \cD \to\cC$ is right Quillen and thus by \cite[Theorem 11.6.5]{hirschhorn}, $R_*\colon \od\to \oc$ is also right Quillen. Thus $R_*\Phi f\colon R_*\Phi X\to R_*\Phi Y$ is a fibration.

As a right adjoint, $R$ commutes with limits.  This allows us to equate $\Phi R_*$ and $R_*\Phi$.  To be explicit, consider any $H\leq G$ and $X\colon \bg\to \cD$. By definition, $(\Phi R_*X)(G/H)=(R_*X)^H$ is given by 
$\lim_{BH} RX,$
the limit along $BH$ of the composite functor $RX\colon \bg\to \cD\to \cC$. Since $R$ commutes with limits, we obtain the identification
\[\lim_{BH} RX=R\lim_{BH} X.\]
This later object is the definition of $(R_* \Phi X)(G/H)$, and therefore the maps $R_*\Phi f$ and $\Phi R_*f$ are equal. This means that $\Phi R_* X\xrightarrow{\Phi R_* f}\Phi R_*Y$ 
  is a fibration, and thus, since fibrations  in $\gc$ are created under 
  $\Phi$, $R_*X\xrightarrow{R_*f} R_*Y$ is a fibration.

  Suppose $L\colon \cC\rightleftarrows\cD:\! R$ is a Quillen equivalence. To show the adjunction 
  $\gc \rightleftarrows \gd$ is a Quillen equivalence, we apply the 2-out-of-3 property for 
  Quillen equivalences  \cite[Corollary 1.3.15]{hovey}.  We  then have a diagram of Quillen adjunctions, in which both the diagrams of the left adjoints and the right adjoints commute,
  \[
   \xymatrix{\gc\ar@<.5ex>[r]^{L_*}\ar@<.5ex>[d]^{\Phi} &\gd\ar@<.5ex>[l]^{R_*}\ar@<.5ex>[d]^{\Phi}\\
   \oc\ar@<.5ex>[u]^{\Lambda}\ar@<.5ex>[r]^{L_*}& \od\ar@<.5ex>[u]^{\Lambda}\ar@<.5ex>[l]^{R_*}}
  \]
  such that bottom and two side adjunctions are Quillen equivalences.  Thus the top adjunction must be a Quillen equivalence as well.
\end{proof}

After we verify that $\cC at$ satisfies the conditions of \autoref{thm:stephanlifting} in the next section, \autoref{thm:expandedmain} completes the proof of Theorem B.

We now record that Stephan's construction preserves right properness. 
	
\begin{prop}\label{prop:leftproper}
  Let $\cC$ be a cofibrantly generated model category that is right proper and satisfies 
  the conditions of \autoref{thm:stephanlifting}.  
  Then the fixed point model structure on $\gc$ is right proper.
\end{prop}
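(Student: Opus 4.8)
The plan is to transfer right properness across the adjunction $\Lambda \dashv \Phi$, reducing the statement for $\gc$ to right properness of the projective model structure on $\oc$, which in turn follows levelwise from right properness of $\cC$. The two structural facts that make this work are that $\Phi$ is a right adjoint (so it preserves all limits, in particular pullbacks) and that both fibrations and weak equivalences in $\gc$ are created by $\Phi$, by the definition of the fixed point model structure in \autoref{thm:stephanlifting}.

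Concretely, I would start with a pullback square in $\gc$
\[
\xymatrix{
P \ar[r] \ar[d] & X \ar[d]^f \\
B \ar[r]_g & Y
}
\]
in which $f$ is a fibration and $g$ is a weak equivalence, and show that the top map $P \to X$ is a weak equivalence. Since $\Phi$ has the left adjoint $\Lambda$, it preserves this pullback, so applying $\Phi$ yields a pullback square in $\oc$ with vertical map $\Phi f$ and bottom map $\Phi g$. Because the fixed point model structure is created by $\Phi$, the map $\Phi f$ is a fibration and $\Phi g$ is a weak equivalence in the projective model structure on $\oc$.

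Next I would record that the projective model structure on $\oc$ is right proper. Here fibrations and weak equivalences are detected levelwise, and limits in the diagram category $\oc$ are computed objectwise over $\og^{op}$; thus the $\Phi$-image of our square is, at each orbit $G/H$, a pullback in $\cC$ of a weak equivalence along a fibration. Right properness of $\cC$ then makes $\Phi P \to \Phi X$ a levelwise, hence projective, weak equivalence. Finally, since $\Phi$ reflects weak equivalences by construction of the fixed point model structure, knowing that $\Phi(P \to X)$ is a weak equivalence lets us conclude that $P \to X$ is a weak equivalence in $\gc$. The argument is essentially formal, and the only point needing care is the commutation of $\Phi$ with the pullback; this is immediate from $\Phi$ being right adjoint, so I do not expect a serious obstacle once right properness of the projective structure is in place.
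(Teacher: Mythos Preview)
Your proposal is correct and follows essentially the same route as the paper: apply $\Phi$ to the pullback square, use that $\Phi$ is a right adjoint to preserve the pullback, invoke right properness of the projective model structure on $\oc$, and conclude via the fact that $\Phi$ creates weak equivalences and fibrations. The only cosmetic difference is that the paper cites Hirschhorn for right properness of $\oc$ rather than spelling out the levelwise argument you give.
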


\begin{proof}
 Suppose $\cC$ is right proper and consider a pullback diagram in $\gc$
\[ \xymatrix{X \ar[r]^{f'}\ar[d]& Y\ar[d]^{h}\\
Z\ar[r]_{f} & W}
\]
 where $h$ is a fibration and $f$ is a weak equivalence. We must show that $f'$ is also a weak equivalence. Since weak equivalences and fibrations in $\gc$ are created by the functor $\Phi\colon \gc\to \oc$ and $\oc$ is right proper
  \cite[Thm.~13.1.14]{hirschhorn}
  this follows from the fact that $\Phi$ is a right adjoint and thus commutes with pullbacks.
\end{proof}

To apply \autoref{thm:stephanlifting} to  the category $\cat$  we will show this category and its fixed point functors 
satisfy conditions that imply the fixed point functors are cellular.

\begin{prop}\label{prop:cellularnew}
Let  $H$ be a subgroup of $G$ and
   $\cC$ be a cofibrantly generated model category.   Assume the $H$-fixed point functor $(-)^H\colon \gc\to \cC$ 
  \begin{enumerate}
   \item  
  preserves filtered colimits in $\gc$ where each arrow is a non-equivariant cofibration after applying the forgetful functor $\gc\to \cC$,
	\item preserves pushouts of diagrams where one leg is given by 
	  \[ G/K\otimes f\colon G/K\otimes A\to G/K\otimes B\]
	  for a subgroup $K$ of $G$ and a generating cofibration $f\colon A\to B$ in $\cC$, and 
	  \item for any subgroup $K$ of $G$ and any object $A$ of $\cC$ the induced map \[(G/K)^H\otimes A\to (G/K\otimes A)^H\] is an isomorphism in $\cC.$
\end{enumerate}
Then the $H$-fixed point functor is cellular.
\end{prop}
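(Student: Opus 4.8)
The plan is to deduce the three conditions of \autoref{defn:cellular} from the three hypotheses. Two of them are essentially immediate. Since $G$ is discrete every subgroup is closed, so condition (3) of cellularity is literally hypothesis (3). For condition (1), note that every directed poset is a filtered category, so a directed colimit of the required kind is in particular a filtered colimit of that kind; hence preservation of \emph{all} filtered colimits along underlying cofibrations, which is hypothesis (1), already gives preservation of the directed ones. (The two properties are in fact equivalent, but we only need this direction.)

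The real content is condition (2): upgrading preservation of pushouts along $G/K\otimes f$ from generating cofibrations to arbitrary cofibrations. I would run a cellularity argument. Let $\mathcal{S}$ be the class of cofibrations $f$ in $\cC$ such that, for every subgroup $K\leq G$, the functor $(-)^H$ preserves every pushout in $\gc$ one of whose legs is $G/K\otimes f$. Hypothesis (2) says $\mathcal{S}$ contains the generating cofibrations, and I want to conclude that $\mathcal{S}$ contains all cofibrations. Since in a cofibrantly generated model category the class of cofibrations is the closure of the generating cofibrations under pushout, transfinite composition, and retract, it suffices to show $\mathcal{S}$ is closed under these three operations.

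Closure under pushout and retract is formal. For pushout I would use that $G/K\otimes(-)\colon\cC\to\gc$ preserves colimits (it is objectwise the coproduct $\coprod_{G/K}$), so it carries both the defining pushout of a pushout $f'$ of $f$ and the attaching square to pushouts in $\gc$; two applications of the pushout-pasting lemma then identify the relevant pushout along $G/K\otimes f'$ with one along $G/K\otimes f$ and transport the conclusion, using $f\in\mathcal{S}$ also to compute the fixed points of the new cell. For retract I would use that a retract of a pushout square is again a pushout square, together with the fact that any functor — here both $(-)^H$ and $G/K\otimes(-)$ — preserves retracts; thus if $f$ is a retract of some $g\in\mathcal{S}$, the pushout square along $G/K\otimes f$ is a retract of one along $G/K\otimes g$, and preservation descends.

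The main obstacle is closure under transfinite composition, where hypothesis (1) must be threaded carefully. Given a transfinite composite $g\colon W_0\to\cdots\to W_\beta$ of maps in $\mathcal{S}$ and a pushout of $G/K\otimes g$ along a map into some object $X$, I would filter the resulting pushout $Y$ by $Y_\alpha=X\sqcup_{G/K\otimes W_0}(G/K\otimes W_\alpha)$. Because $G/K\otimes(-)$ preserves the composite, each transition $Y_\alpha\to Y_{\alpha+1}$ is a pushout along $G/K\otimes(W_\alpha\to W_{\alpha+1})$, which $(-)^H$ preserves since $W_\alpha\to W_{\alpha+1}\in\mathcal{S}$; a transfinite induction then computes $(-)^H(Y_\alpha)$ as the expected pushout at each successor stage. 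The delicate point is the limit stages and the passage to $Y=Y_\beta$: to invoke hypothesis (1) I must verify that each $Y_\alpha\to Y_{\alpha+1}$ is a non-equivariant cofibration after applying the forgetful functor $\gc\to\cC$. This holds because colimits in $\gc$ are created by that forgetful functor, the underlying map of $G/K\otimes(W_\alpha\to W_{\alpha+1})$ is a coproduct of copies of the cofibration $W_\alpha\to W_{\alpha+1}$, and pushouts of cofibrations are cofibrations. With this in hand, hypothesis (1) gives $(-)^H(Y)=\colim_\alpha(-)^H(Y_\alpha)$, and the same colimit-preservation applied to $(G/K\otimes W_\beta)^H=\colim_\alpha(G/K\otimes W_\alpha)^H$ lets me reassemble the result as the single pushout $X^H\sqcup_{(G/K\otimes W_0)^H}(G/K\otimes W_\beta)^H$, which is exactly what condition (2) of \autoref{defn:cellular} demands.
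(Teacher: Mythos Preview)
Your proposal is correct and follows essentially the same route as the paper: reduce to upgrading condition (2) from generating cofibrations to all cofibrations, and do this by showing closure under pushout (via pushout pasting), transfinite composition (via the filtered-colimit hypothesis at limit stages, after checking the transition maps are underlying cofibrations), and retract. The only difference is packaging: the paper abstracts these three closure steps into a standalone lemma about an arbitrary functor $F$ and classes $I\subset J$ of maps in its domain (applied with $F=(-)^H\colon\gc\to\cC$ and $I=\{G/K\otimes f: f\text{ a generating cofibration}\}$), whereas you phrase it as saturation of a class $\mathcal{S}$ of maps in $\cC$ and use that $G/K\otimes(-)$ preserves colimits to transport the cell structure; the underlying arguments are identical.
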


We postpone the proof to \S\ref{sec:gen}, but first observe that it allows us to prove a dual result to \autoref{prop:leftproper}.  
This proof is also postponed to \S\ref{sec:gen}.

\begin{prop}\label{prop:left}
  Let $\cC$ be a cofibrantly generated model category that is left proper and satisfies  the conditions of \autoref{prop:cellularnew}. Then the fixed point model structure on $\gc$ is left proper.
\end{prop}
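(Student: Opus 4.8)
The plan is to reduce left properness of $\gc$ to left properness of $\cC$, checked levelwise through $\Phi$, by using cellularity of the fixed point functors (which \autoref{prop:cellularnew} guarantees) to commute $(-)^H$ past the relevant pushout. Consider a pushout square in $\gc$
\[\xymatrix{A \ar[r]^{w}\ar[d]_{i} & B\ar[d]\\ C\ar[r]_{\bar w} & D}\]
with $i$ a cofibration and $w$ a weak equivalence; we must show $\bar w$ is a weak equivalence, i.e.\ that $\Phi\bar w$ is a levelwise weak equivalence in $\oc$.

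First I would reduce to the case that $i$ is a relative cell complex built from the generating cofibrations of $\gc$, which by Stephan's lifting along $\Lambda\dashv\Phi$ may be taken to be the maps $G/K\otimes f$ for $f$ a generating cofibration of $\cC$. In a cofibrantly generated model category every cofibration is a retract, under its domain, of such a relative cell complex $j\colon A\to C'$; pushing $w$ out along both $i$ and $j$ and using functoriality of pushouts exhibits $\bar w$ as a retract of the pushout $\bar w'$ of $w$ along $j$. Since weak equivalences are closed under retracts, it suffices to treat the cell complex case.

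So suppose $i\colon A\to C=\colim_\beta X_\beta$ is a relative cell complex, and set $Y_\beta = B\cup_A X_\beta$, so that $D=\colim_\beta Y_\beta$ and each $Y_\beta\to Y_{\beta+1}$ is again a pushout of the same cell $G/K\otimes f$ that was attached at $X_\beta\to X_{\beta+1}$. Fixing a subgroup $H$, I would run a transfinite induction proving $Y_\beta^H = B^H\cup_{A^H}X_\beta^H$: at successor stages the second cellularity condition, together with the isomorphism $(G/K\otimes f)^H\cong (G/K)^H\otimes f$ of the third condition, identifies $X_{\beta+1}^H$ and $Y_{\beta+1}^H$ as pushouts of the $\cC$-cofibration $(G/K)^H\otimes f$, after which the inductive step follows from associativity of pushouts; at limit stages the first cellularity condition applies, since every transition map is a non-equivariant cofibration. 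Passing to the colimit gives $D^H = B^H\cup_{A^H}C^H$, and the same bookkeeping shows $i^H\colon A^H\to C^H$ is a relative cell complex in $\cC$, hence a cofibration there.

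With these two facts the conclusion is immediate: evaluating $\Phi$ of the original square at $G/H$ yields a genuine pushout in $\cC$ of the weak equivalence $w^H=\Phi w(G/H)$ along the cofibration $i^H$, so left properness of $\cC$ forces $\bar w^H\colon C^H\to D^H$ to be a weak equivalence. As this holds for every $H$, the map $\Phi\bar w$ is a levelwise, hence projective, weak equivalence, and therefore $\bar w$ is a weak equivalence in $\gc$. I expect the main obstacle to be exactly the pushout-preservation step: the pushout $D=B\cup_A C$ is \emph{not} of the special form to which the second cellularity condition directly applies, since $w$ is merely a weak equivalence, and the substance of the argument is to circumvent this by decomposing $i$ into cells and commuting $(-)^H$ past one admissible pushout at a time, with the first condition controlling the limit ordinals.
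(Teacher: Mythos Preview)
Your proposal is correct and follows essentially the same approach as the paper: both arguments show that $(-)^H$ preserves the relevant pushout along a cofibration and takes the cofibration $i$ to a cofibration, then invoke left properness at the non-equivariant level. The only difference is organizational---the paper packages your transfinite induction and retract reduction as the already-proven \autoref{gencof} (applied with $F=(-)^H$, hence to $\Phi$) and then appeals to left properness of $\oc$ rather than of $\cC$ directly, but the content is the same.
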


\section{The model category $\gcat$}
\label{sec: DirectedColimitCondition}

In this section we will show that $\cat$ satisfies the hypotheses of \autoref{prop:cellularnew} proving Theorem A.
We start by 
recalling an explicit description of the generating cofibrations and generating acyclic cofibrations in Thomason's model structure on $\cat$.

\begin{thm}\cite[Thm. 6.3]{FP}\label{thm:gencof}
   The Thomason model structure on  $\cat$ is cofibrantly generated with generating cofibrations 
  \[ \{c\mathrm{Sd}^2\partial \Delta[m]\to c\mathrm{Sd}^2\Delta[m] \mid m\geq 0 \}\]
  and generating acyclic cofibrations
  \[ \{ c\mathrm{Sd}^2\Lambda^k[m]\to c\mathrm{Sd}^2\partial \Delta[m] \mid m\geq 1 \text{ and } 0\leq k \leq m \}.\]
  Here $c$ is the left adjoint of the nerve functor and $\mathrm{Sd}$ is barycentric subdivision.
\end{thm}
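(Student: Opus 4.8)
The plan is to realize Thomason's model structure as the right transfer (in the sense of Kan's lifting criterion) of the standard model structure on $\sset$ along the adjunction $c\mathrm{Sd}^2\colon\sset\rightleftarrows\cat\colon Ex^2N$, and then to read off the generators from the images of the generators in $\sset$. Recall that the standard model structure on $\sset$ is cofibrantly generated with generating cofibrations the boundary inclusions $\partial\Delta[m]\to\Delta[m]$ and generating acyclic cofibrations the horn inclusions $\Lambda^k[m]\to\Delta[m]$. Since the weak equivalences and fibrations of Thomason's structure \cite{thomason} are by definition the maps sent by $Ex^2N$ to weak equivalences and fibrations of $\sset$, the structure on $\cat$ is precisely a candidate transferred structure along the right adjoint $Ex^2N$, with the left adjoint $c\mathrm{Sd}^2$ producing the candidate generating sets $c\mathrm{Sd}^2\{\partial\Delta[m]\to\Delta[m]\}$ and $c\mathrm{Sd}^2\{\Lambda^k[m]\to\Delta[m]\}$.

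First I would invoke Kan's lifting theorem \cite[\S11.3]{hirschhorn}: given the cofibrantly generated structure on $\sset$ with generating sets $I$ and $J$, a transferred cofibrantly generated model structure on $\cat$ with generating cofibrations $c\mathrm{Sd}^2 I$ and generating acyclic cofibrations $c\mathrm{Sd}^2 J$ exists provided that (a) $\cat$ is complete and cocomplete, (b) the domains of $c\mathrm{Sd}^2 I$ and $c\mathrm{Sd}^2 J$ are small relative to the corresponding cell complexes, and (c) every relative $c\mathrm{Sd}^2 J$-cell complex is sent by $Ex^2N$ to a weak equivalence in $\sset$. Conditions (a) and (b) are routine: $\cat$ is complete and cocomplete, and since $\cat$ is locally presentable every object---in particular every domain appearing in the two generating sets---is small, so the small object argument applies. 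Once the structure exists, the fibrations are exactly the maps with the right lifting property against $c\mathrm{Sd}^2 J$ and the acyclic fibrations those with the right lifting property against $c\mathrm{Sd}^2 I$, and these are identified with the $Ex^2N$-fibrations and $Ex^2N$-acyclic fibrations, matching Thomason's definitions.

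The hard part is condition (c), the acyclicity of relative $c\mathrm{Sd}^2 J$-cell complexes, and this is where essentially all of the real work lies. Each individual generating acyclic map $c\mathrm{Sd}^2\Lambda^k[m]\to c\mathrm{Sd}^2\Delta[m]$ is a weak equivalence, since applying $B$ recovers, up to natural weak equivalence, the inclusion $|\Lambda^k[m]|\to|\Delta[m]|$ of contractible spaces. The difficulty is that weak equivalences in $\cat$ are not preserved by arbitrary pushouts: the nerve does not commute with pushouts, and $B$ of a pushout of categories need not be a homotopy pushout, so one cannot argue abstractly that pushing out an acyclic generator yields a weak equivalence. The resolution, and the reason for the \emph{double} subdivision, is that $c\mathrm{Sd}^2$ carries the generators to maps between posets that are moreover Dwyer maps; for pushouts of categories in which one leg is such a map, $B$ \emph{does} convert the pushout into a homotopy pushout. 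Granting this, pushing out a weak equivalence that is a Dwyer map again yields a weak equivalence, each stage of a relative $c\mathrm{Sd}^2 J$-cell complex is a weak equivalence, and the transfinite composite is too. Establishing the Dwyer-map property of the generators and the homotopy invariance of $B$ on the associated pushouts is the technical core of the theorem and the step I expect to be the main obstacle; this is the delicate analysis of Thomason \cite{thomason}, made explicit in \cite{FP}.
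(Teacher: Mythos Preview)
The paper does not itself prove this theorem; it is simply quoted from \cite[Thm.~6.3]{FP} and used as a black box, so there is no proof here to compare your proposal against. Your sketch---right-transfer the cofibrantly generated structure on $\sset$ along $c\mathrm{Sd}^2\dashv Ex^2N$ via Kan's lifting theorem, with the acyclicity condition discharged through the Dwyer-map property of the double-subdivided horn inclusions and the resulting homotopy invariance of pushouts---is correct and is exactly the argument carried out in \cite{thomason} and reformulated in the cited reference.
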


It is important to  note that the sources and targets of the generating cofibrations and acyclic cofibrations are posets. 
To verify the conditions of  \autoref{prop:cellularnew} we will consider a more general 
collection of maps, the {\it Dwyer maps of posets}, rather than working directly with these generating cofibrations and acyclic cofibrations.

Recall that  subcategory $\cA$ of a category $\cB$ is a \textbf{sieve} if for every morphism $\beta\colon b\to a$ in $\cB$ with target $a$ in $\cA$, 
both the object $b$ and the morphism $\beta$ lie in $\cA$. A \textbf{cosieve} is defined dually.
\begin{defn}\cite{thomason}
   A sieve inclusion $\cA \to \cB$ is a \textbf{Dwyer map} if  there is a cosieve $\cW$ in $\cB$ containing $\cA$ 
  so that the inclusion functor $i\colon \cA \to \cW$ admits a right adjoint $r\colon \cW \to \cA$ satisfying 
  $ri=\id_{\cA}$ and  the unit of this adjunction is  the identity. 
\end{defn}

The generating cofibrations and acyclic cofibrations are Dwyer maps (of posets), but as observed in \cite{cisinski}, Dwyer maps are not closed under retracts and there 
are cofibrations that are not Dwyer maps. 

We next show $\mathcal{C}at$ satisfies the conditions of \autoref{prop:cellularnew}.  Condition 3 of \autoref{prop:cellularnew} 
is satisfied because the action of $G$ on $G/K\otimes A$ is entirely through the action of $G$ on $G/K$.  
 Condition 1 will follow from the next proposition, since all cofibrations in $\cat$ are monomorphisms.

\begin{prop}
Let $H$ be a subgroup of  $G$. Then the fixed point functor \[(-)^H\colon \gcat \to \cat\] preserves filtered colimits for which each morphism is an underlying monomorphism in $\cat$.
\end{prop}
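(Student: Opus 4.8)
The plan is to unravel what the $H$-fixed-point functor does on objects and morphisms, reduce the statement about filtered colimits to the corresponding statement in $\set$ (for objects and morphisms separately), and use the fact that filtered colimits commute with finite limits in $\set$ together with the monomorphism hypothesis.

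First I would make the description of $(-)^H$ completely explicit. For a $G$-category $C$ with action $\sigma_g$, the fixed-point category $C^H = \lim_{BH} C$ has as objects those $c \in \ob C$ with $\sigma_h(c) = c$ for all $h \in H$, and as morphisms those $\phi \in \Mor C$ with $\sigma_h(\phi) = \phi$ for all $h \in H$; composition and identities are inherited from $C$. Thus both $\ob(C^H)$ and $\Mor(C^H)$ are computed as equalizers (indeed, as intersections of the fixed sets of the individual maps $\sigma_h$) inside $\ob C$ and $\Mor C$. The point is that each of $\ob(-)^H$ and $\Mor(-)^H$ is, levelwise, a finite-limit construction in $\set$ when $H$ is finite, and more generally an intersection of fixed sets; the key structural fact I will exploit is that forming $H$-fixed points is a limit, and limits over a fixed shape commute with each other.

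Next I would set up a filtered colimit $C = \colim_{i \in I} C_i$ in $\gcat$ in which every transition functor $C_i \to C_j$ is an underlying monomorphism. Filtered colimits in $\cat$ are computed by taking the filtered colimit of object sets and of morphism sets separately in $\set$, so $\ob C = \colim_i \ob C_i$ and $\Mor C = \colim_i \Mor C_i$, and the same holds in $\gcat$ since the forgetful functor to $\cat$ and the action are all compatible with the colimit. I then want to show the canonical comparison map $\colim_i (C_i^H) \to (\colim_i C_i)^H = C^H$ is an isomorphism of categories, which I will verify separately on objects and on morphisms. On objects, I must show $\colim_i \ob(C_i^H) \to \ob(C^H)$ is a bijection; here $\ob(C_i^H)$ is the subset of $\ob C_i$ fixed by all $h \in H$, and I must compare the filtered colimit of these fixed subsets with the fixed subset of the filtered colimit. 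The monomorphism hypothesis is precisely what lets the transition maps be treated as inclusions, so that an element of $\ob C$ fixed by all $h$ has a representative in some $\ob C_i$ that is already fixed (using that an element fixed in the colimit and a representative agree after passing far enough in the filtered system, and injectivity forces the representative to be genuinely fixed). The morphism case is identical with $\Mor$ in place of $\ob$.

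The main obstacle is the comparison of fixed points with the colimit, and this is exactly where both the filtered and monomorphism hypotheses are essential. Surjectivity of the comparison map is the delicate direction: a morphism or object $x$ fixed by all $h \in H$ lifts to some $x_i$, but $\sigma_h(x_i)$ need only equal $x_i$ \emph{after} passing to some later index; filteredness lets me choose, for each of the finitely many (or, in general, all) $h$, a common stage where $\sigma_h(x_i) = x_i$, and injectivity of the transition maps then guarantees that $x_i$ is fixed already at that stage rather than merely becoming fixed in the limit. Injectivity of the comparison map reduces to injectivity of the transition maps restricted to fixed subsets, which is immediate once the transition maps are monomorphisms. I would remark that without the monomorphism hypothesis surjectivity can fail, since an unfixed element could be identified with its translate only in the colimit; this is the reason the condition appears. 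Finally, since the comparison is a bijection on both objects and morphisms and is manifestly compatible with composition and identities, it is an isomorphism of categories, completing the proof.
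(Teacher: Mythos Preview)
Your approach is correct and takes a more direct route than the paper. The paper transfers the problem to $\sset$ via the nerve functor---using that $N$ preserves filtered colimits and, as a right adjoint, commutes with $(-)^H$---invokes the set-level fact that $H$-fixed points commute with filtered colimits of injections in $\gset$ (extended levelwise to $\sset$), and then returns to $\cat$ via full faithfulness of $N$. You instead work directly with object and morphism sets, which is more elementary and avoids the nerve entirely; both arguments ultimately rest on the same statement in $\gset$.

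One point in your surjectivity argument needs sharpening. You write that filteredness lets you find a common stage at which $\sigma_h(x_i) = x_i$ ``for each of the finitely many (or, in general, all) $h$''. Filteredness alone does \emph{not} furnish a common bound for infinitely many conditions, so as written this step is unjustified when $H$ is infinite. The monomorphism hypothesis actually makes the passage to a later stage unnecessary: since every transition map is injective, the structure map $\ob C_i \to \ob C$ (and likewise on $\Mor$) is itself injective, so if $x_i$ and $\sigma_h(x_i)$ have the same image in the colimit they are already equal in $C_i$. Thus any representative of a fixed element is fixed at its own stage, for every $h$ simultaneously, and no common later index is needed. With this adjustment your argument is complete. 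You might also add a word justifying that filtered colimits in $\cat$ are computed on object and morphism sets separately; this is standard (the forgetful functor to reflexive graphs, or equivalently the nerve, preserves filtered colimits), but it is the analogue of the step the paper cites explicitly.
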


\begin{proof}
  Let $I$ be a filtered category and $F$ be a functor from $I$ to $\gcat$ such that for each morphism $i$ in $I$, $F(i)$ is a monomorphism.    First note that $N \colim_{I}(F(i)^H) \cong \colim_{I}\left(N\left(F(i)^H\right)\right)$ since the nerve commutes with 
  filtered colimits \cite{lack}. 
   The nerve is a right adjoint and taking fixed points is a limit, so we have an isomorphism
  $N\left(F(i)^H\right)\cong \left(NF(i)\right)^H$.  Together these give an isomorphism
  \[N \colim_{I}(F(i)^H) \cong \colim_{I}\left(\left(NF(i)\right)^H\right).\]
   Taking $H$-fixed points preserves filtered colimits  in $\gset$ where each arrow is a monomorphism. This extends to $\sset$ since fixed points and colimits in $\sset$  are computed levelwise. Since each $F(i)$ is a monomorphism, so is $NF(i)$, thus we have  
  \begin{equation}\label{eq:iso}\colim_{I}\left(\left(NF(i)\right)^H\right) \cong  \left(\colim_I NF(i)\right)^H\cong \left(N\colim_IF(i) \right)^H.\end{equation}
  Finally, we have an isomorphism $\left(N\colim_IF(i) \right)^H\cong N\big(\colim_I F(i)\big)^H$ since the nerve is a right adjoint.
  Together this gives an isomorphism \[N \colim_{I}(F(i)^H) \cong N\big(\colim_I F(i)\big)^H.\]
The nerve is fully faithful, so it follows that $\colim_{I}(F(i)^H) \cong \big(\colim_I F(i)\big)^H$, thus completing the proof.
\end{proof}

\begin{rmk} Note that if $G$ is finite, then $(-)^H$ preserves all filtered colimits.  In this case \eqref{eq:iso} follows from 
the fact that 
finite limits and filtered colimits commute in $\set$ and this extends to $\sset$ since  limits and colimits in $\sset$  are computed levelwise.  
\end{rmk}

We will verify the second condition in \autoref{prop:cellularnew}
for Dwyer maps of posets 
since they allow simple descriptions of pushouts of categories.

\begin{prop}\label{pushout}
Let $\cA \to \cB$ be a Dwyer map of posets and suppose the diagram 
\begin{eqnarray*}
\xymatrix{
 G/K\times \cA \ar[r]\ar[d]_{F} & G/K\times \cB \ar[d]\\
  \cC \ar[r] & \cD
}
\end{eqnarray*}
is a pushout diagram in $\gcat$. Then this diagram remains a pushout after taking $H$-fixed points.
\end{prop}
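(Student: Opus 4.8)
The plan is to combine the third condition of \autoref{prop:cellularnew} (already verified for $\cat$) with the explicit combinatorial description of a pushout along a Dwyer map, and then match $H$-fixed points term by term. Write $S=(G/K)^H$ for the set of $H$-fixed cosets. By condition (3), after applying $(-)^H$ the left-hand column of the square is identified with the coproduct $S\times\cA\to S\times\cB$ of copies of the Dwyer map indexed by $S$, and the gluing functor restricts to $F^H\colon S\times\cA\to\cC^H$; this lands in $\cC^H$ because $F$ is equivariant and each coset in $S$ is $H$-fixed. Thus it suffices to show that $\cD^H$ is the pushout $\cP:=\cC^H\cup_{S\times\cA}(S\times\cB)$ in $\cat$.

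Next I would record the description of $\cD$ supplied by the Dwyer structure. Let $\cW$ be the cosieve with retraction $r\colon\cW\to\cA$. On objects $\ob\cD=\ob\cC\sqcup\big(G/K\times(\ob\cB\setminus\ob\cA)\big)$, with $G$ acting only on the $G/K$-coordinate of the new objects. The morphism sets are controlled by the two closure conditions: since $\cA$ is a sieve there is no morphism in $\cB$ from $\cB\setminus\cA$ into $\cA$, and since $\cW$ is a cosieve there is none from $\cA$ out to $\cB\setminus\cW$. Consequently $\cD$ has no morphism from a new object to an object of $\cC$, and no morphism from $\cC$ into a new object lying over $\cB\setminus\cW$. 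Between objects of $\cC$ the morphisms are those of $\cC$, between new objects over a common coset they are those of $\cB$, and for a new object over $b\in\cW\setminus\cA$ the adjunction isomorphism $\cB(a,b)\cong\cA(a,r(b))$ forces a natural (hence $H$-equivariant) isomorphism $\cD(c,(gK,b))\cong\cC\big(c,F(gK,r(b))\big)$.

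Finally I would apply $(-)^H$ to this description and compare with $\cP$, using that $S\times\cA\to S\times\cB$ is again a Dwyer map --- a coproduct of Dwyer maps with cosieve $S\times\cW$ and retraction $\id_S\times r$ --- so that $\cP$ admits the same description. On objects both $\cD^H$ and $\cP$ are $\ob\cC^H\sqcup\big(S\times(\ob\cB\setminus\ob\cA)\big)$. The only morphism comparison that is not immediate is $\cD^H(c,(gK,b))$ for $c\in\cC^H$, $gK\in S$ and $b\in\cW\setminus\cA$: here $F(gK,r(b))$ is an $H$-fixed object of $\cC$ (again because $gK$ is $H$-fixed and $r(b)$ is independent of the coset), so the equivariant isomorphism above gives $\cD(c,(gK,b))^H\cong\cC\big(c,F(gK,r(b))\big)^H=\cC^H\big(c,F(gK,r(b))\big)=\cP(c,(gK,b))$. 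The remaining morphism sets and the composition agree on the nose, yielding $\cD^H\cong\cP$.

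I expect the real work to lie in the second step: pinning down the Hom-sets of the Dwyer-map pushout and checking that the pushout creates no unexpected composite morphisms, so that $(-)^H$ reduces to transparent bookkeeping of cosets and $\cC$-morphisms. Once that description is established, the fixed-point matching is essentially forced by condition (3) together with the observation that the ``floor'' objects $F(gK,r(b))$ are automatically $H$-fixed over $H$-fixed cosets.
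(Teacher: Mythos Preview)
Your proposal is correct and follows essentially the same route as the paper: isolate an explicit description of the pushout $\cD$ along a Dwyer map of posets (objects and the four types of hom-sets, with the mixed case reduced via the retraction $r$), observe that $(G/K)^H\times\cA\to(G/K)^H\times\cB$ is again a Dwyer map of posets, and then match the resulting description of $\cP$ against $\cD^H$ term by term. The paper separates out your ``second step'' as a standalone lemma (showing each equivalence class of zig-zags has a unique representative $(\varepsilon_b,\gamma)$), which is exactly the work you flag as the heart of the argument.
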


The proof of this proposition is based on a very explicit description of the morphisms in $\cD$.  We give that description first and then continue to 
the proof of the proposition.

\begin{lem}\label{lem:pushoutcatsieve}
  Let $i\colon \cA \to \cB$ be a Dwyer map between posets with cosieve $\cW$ and retraction $r$, and let $F\colon \cA \to \cC$ be any functor. 
  If $\cD$ is the pushout of $i$ and $F$, the set of objects of $\cD$ can be identified with \[\ob(\cC) \amalg \left ( \ob(\cB)\setminus \ob(\cA)\right).\]
  If $c$ is an object of $\cC$ and $b$ is an object of $\cB$ that is not an object of $\cA$, then 
  \[\cD(c,b)\cong\cC(c, F(r(b))),\] if $b$ is in $\cW$, and is otherwise empty.
\end{lem}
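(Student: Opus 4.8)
The plan is to treat the two assertions separately, starting with the objects. Since the forgetful functor $\ob\colon \cat\to \set$ has a right adjoint (the functor sending a set to the indiscrete category on it), it preserves colimits, so $\ob(\cD)$ is the pushout in $\set$ of $\ob(\cC)\leftarrow \ob(\cA)\to \ob(\cB)$. The sieve inclusion $i$ is injective on objects, so this pushout is $\ob(\cC)\amalg\bigl(\ob(\cB)\setminus\ob(\cA)\bigr)$, with $\ob(\cA)$ glued onto its image under $F$. This gives the object description.

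For the morphism computation I would argue by reducing an arbitrary morphism of $\cD$ to a normal form. Every morphism $c\to b$ is represented by a zig-zag alternating between morphisms of $\cC$ and of $\cB$, with morphisms of $\cA$ free to be transported along $F$ or along $i$. Here the three defining properties of a Dwyer map each play a distinct role. First, because $i$ is a sieve, there are no morphisms in $\cB$ from an object of $\cB\setminus\cA$ into $\cA$; hence a path starting at $c$ cannot re-enter $\cC$ once it has left $\cA$ inside $\cB$, so any such path has a single transition and factors as $c\xrightarrow{\phi}F(a)$ in $\cC$ followed by $i(a)\xrightarrow{\psi}b$ in $\cB$ for some $a\in\cA$. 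Second, since $\cA\subseteq\cW$ and $\cW$ is a cosieve, the existence of $\psi$ forces $b\in\cW$; if $b\notin\cW$ no such factorization exists and $\cD(c,b)$ is empty. Third, for $b\in\cW$ the adjunction $r\dashv i$ gives $\psi=\varepsilon_b\circ i(\alpha)$, where $\alpha=r(\psi)\colon a\to r(b)$ lies in $\cA$ (using $ri=\id$ and the triviality of the unit) and $\varepsilon_b\colon i(r(b))\to b$ is the counit morphism. Since $i(\alpha)$ is a morphism of $\cA$, in $\cD$ it is identified with $F(\alpha)$, so the whole morphism becomes $\varepsilon_b\circ\bigl(F(\alpha)\circ\phi\bigr)=\varepsilon_b\circ\theta$ with $\theta\in\cC(c,F(r(b)))$. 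This exhibits a surjection $\cC(c,F(r(b)))\to \cD(c,b)$, $\theta\mapsto\varepsilon_b\circ\theta$.

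The main obstacle is injectivity of this assignment, i.e.\ showing that no further relations in the pushout collapse distinct $\theta$. I would settle this not by analyzing relations among zig-zags directly, but by building the explicit candidate category $\cD'$ with object set $\ob(\cC)\amalg(\ob(\cB)\setminus\ob(\cA))$ and the hom-sets dictated by the normal form (with $\cD'(c,b)=\cC(c,F(r(b)))$ when $b\in\cW$ and empty otherwise, and the analogous descriptions in the remaining cases), and verifying it satisfies the universal property of the pushout. Composition in $\cD'$ is well-defined precisely because of functoriality of $r$ and $F$ together with the sieve/cosieve constraints that rule out the problematic object combinations; the evident functors $\cC\to\cD'$ and $\cB\to\cD'$ agree on $\cA$, and any cocone $\cC\to\cE\leftarrow\cB$ extends uniquely over $\cD'$. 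The resulting isomorphism $\cD\cong\cD'$ then forces the surjection above to be a bijection, yielding $\cD(c,b)\cong\cC(c,F(r(b)))$ for $b\in\cW$ and emptiness otherwise.
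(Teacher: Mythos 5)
Your proposal is correct, but it reaches the result by a genuinely different route than the paper. The paper does not build the pushout from scratch: it quotes the description of pushouts of categories along full (sieve) inclusions from \cite{FL}, which presents $\cD(c,b)$ as equivalence classes of pairs $(\beta,\gamma)$ with $\beta\colon a\to b$ in $\cB$, $\gamma\colon c\to F(a)$ in $\cC$, modulo $(\beta\alpha,\gamma)\sim(\beta,F(\alpha)\gamma)$; the Dwyer structure then enters exactly as in your normal-form step, rewriting any pair as $(\varepsilon_b, F(r(\beta))\gamma)$, and the uniqueness of this representative---precisely your injectivity concern---is left ``as the reader can check.'' You instead make the argument self-contained: the object identification follows from the observation that $\ob\colon\cat\to\set$ has a right adjoint (the indiscrete category) and hence preserves pushouts, and the hom-set identification is settled by constructing the candidate category $\cD'$ with hom-sets $\cD'(c,b)=\cC(c,F(r(b)))$ for $b\in\cW$ (empty otherwise) and verifying the pushout universal property, which disposes of injectivity structurally rather than by analyzing the equivalence relation. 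The cost is that the routine verifications you only sketch---that composition $(\theta,\beta)\mapsto F(r(\beta))\circ\theta$ is well defined and associative, that the comparison functor $\cB\to\cD'$ sending $\beta\colon a\to b$ to $F(r(\beta))$ agrees with $\cC\to\cD'$ on $\cA$, and that cocones extend uniquely using $\theta\mapsto Q(\varepsilon_b)\circ P(\theta)$---must actually be carried out; they go through, and your sketch correctly locates where each of the sieve, cosieve, and adjunction hypotheses is needed. What your route buys is independence from the citation to \cite{FL} and no unproved uniqueness-of-normal-form claim. One small slip: the adjunction is $i\dashv r$ (the retraction is the \emph{right} adjoint of the inclusion), not $r\dashv i$ as written; your actual use of the counit $\varepsilon_b\colon ir(b)\to b$ and the factorization $\psi=\varepsilon_b\circ i(r(\psi))$ is nevertheless correct.
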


\begin{proof}
  The proof of \cite[Proposition 5.2]{FL} gives a simple description for the pushout $\cD$ of a  full inclusion $i\colon \cA\to \cB$ and a
  functor $F\colon \cA \to \cC$.  In  the case when $i$ is a sieve, the description is as follows. The objects of $\cD$ are $\ob(\cC) \amalg \left ( \ob(\cB)\setminus \ob(\cA)\right)$
  and  some of the morphisms are given by 
  \[\cD(d,d')=\left\{\begin{array}{ll}\cB(d,d')&\text{if }d,d'\in \ob (\cB) \setminus \ob (\cA),\\
    \cC(d,d')&\text{if }d,d'\in \ob (\cC),\\
    \emptyset&\text{if }d\in \ob (\cB) \setminus \ob (\cA)\text{ and } d'\in \ob (\cC).
    \end{array}\right.
  \]
  For an object $c$ of $\cC$ and an object $b$ of $\cB$ not in $\cA$, the morphisms from $c$ to $b$ in $\cD$ are equivalence classes of pairs $(\beta, \gamma)$ where
  $\beta$ is a morphism $a\to b$ in $\cB$  for some $a\in \cA$ and $\gamma$ is a morphism $c\to F(a)$.
  The equivalence relation on these pairs is generated by $(\beta\alpha, \gamma)\sim (\beta, F(\alpha)\gamma)$ for 
  $\alpha$ in $\cA$, whenever the compositions in question are defined. The equivalence relation is compatible with composition.

  Now assume that $\cA \to \cB$ is a Dwyer map between posets. We denote the counit of the adjunction between the inclusion $\cA \to \cW$ 
  and the retraction $r$ by  $\varepsilon$. If $(\beta, \gamma)$ is a pair of morphisms as above, then $\beta$ is in $\cW$ by the definition of 
  cosieve, and \[\beta=\varepsilon_b r(\beta)\] since the source of $\beta$ is in $\cA$.  Since $r(\beta)\in \cA$, $(\beta,\gamma)$ is 
  equivalent to $(\varepsilon_b, F(r(\beta)) \gamma)$ and, as the reader can check,   every equivalence class has a unique representative of the form $(\varepsilon _b, \gamma)$.
\end{proof}

\begin{proof}[Proof of \autoref{pushout}]
  We must show that if the diagram on the left is a pushout and 
  $\cA \to \cB$ is a Dwyer map of posets then the diagram on the right is also a pushout. 
  \[\xymatrix{
  G/K\times\cA \ar[r]\ar[d]_{F} & G/K\times \cB \ar[d]& (G/K)^H\times \cA \ar[r]\ar[d]_{F^H} & (G/K)^H \times \cB \ar[d]\\
  \cC \ar[r] & \cD &  \cC^H \ar[r] & \cD^H
  }\]

 First observe that since $\gcat$ is a diagram category, the pushout is computed in the underlying category $\cat$.  
Since $G/K$ and $(G/K)^H$ are sets considered as discrete categories, the top horizontal maps on both 
diagrams are also Dwyer maps of posets. Thus we can apply \autoref{lem:pushoutcatsieve} to both diagrams.
  
The objects of $\mathcal{D}$ are
  \[
   \ob(\cC) \amalg \bigl( G/K \times \ob(\cB)\setminus \ob(\cA) \bigr)
  \]
  so the objects of $\cD^H$ are given by
  $
   \ob(\cC)^H \amalg \bigl( (G/K)^H \times (\ob(\cB)\setminus \ob(\cA)) \bigr) .
  $
  The objects of the pushout $\cP$ of $(G/K)^H\times \cA\to (G/K)^H\times \cB$ and $F^H$ are identical to those in $\cD^H$ 
  and the induced map from this pushout to $\cD^H$ is an isomorphism on objects.

Applying \autoref{lem:pushoutcatsieve}, the morphisms of $\cD$ are 
  \[\cD(d,d')\!=\!\left\{\begin{array}{ll} \{\id_{gK}\}\!\times\!\cB(b,b')&\begin{array}{l}\text{if }d=(gK,b),d'=(gK,b') \text{ for } \\\,\,b,b'\in \ob (\cB)\setminus \ob(\cA),\end{array}\vspace{.05in}\\\vspace{.05in}
    \cC(d,d')&\begin{array}{l}\text{if }d,d'\in \ob (\cC),\end{array}\\\vspace{.05in}
   \{\id_{gK}\} \!\times\! \cC(d, F(gK, r(b)))&\begin{array}{l}\text{if }d\in \ob (\cC)\text{ and } d'=(gK, b) \text{ for } \\\,\,b\in \ob (\cB)\setminus \ob(\cA),\end{array}\\\vspace{.05in}
  \emptyset& \begin{array}{l}\text{if }d=(gK, b)\text{ and } d'\in \ob (\cC) \text{ for } \\\,\,b\in \ob (\cB)\setminus \ob(\cA).\end{array}
    \end{array}\right.
  \]
  and so for objects $d$ and $d'$ in $\cD ^H$  we have 
  \[\cD^H\!(d,d')\!=\!\left\{\begin{array}{l@{\hspace{2pt}}l} \{\id_{gK}\}\!\times\!\cB(b,b')&\begin{array}{l}\text{if }d=(gK,b),d'=(gK,b') \text{ for } \\ gK\in (G/K)^H  
	\text{,  } b,b'\in \ob (\cB)\!\setminus\! \ob(\cA),\end{array}\vspace{.05in}\\
    \cC^H(d,d')&\begin{array}{l}\text{if }d,d'\in \ob (\cC^H),\end{array}\vspace{.05in}\\
    \{\id_{gK}\} \!\times\! \cC^H(d, F( gK,r(b)))&\begin{array}{l}\text{if }d\in \ob (\cC^H)\text{ and } d'=(gK, b) \text{ for } \\ 
	gK\in (G/K)^H   \text{,  }b\in \ob (\cB)\setminus \ob(\cA).\end{array}\vspace{.05in}\\
  \emptyset& \begin{array}{l}\text{if }d=(gK, b)\text{ and } d'\in \ob (\cC^H) \text{ for } \\gK\in(G/K)^H   \text{,  }b\in \ob (\cB)\setminus \ob(\cA).\end{array}
    \end{array}\right.
  \]

  For the pushout $\cP$, the analogous statement holds, and thus we have the same description for the morphism sets of $\cP$ and $\cD^H$ and the induced map 
  $\cP\to \cD^H$ is an isomorphism on morphism sets.  
\end{proof}

\section{Cellular functors and left proper model structures}\label{sec:gen}
We now return to the proof of \autoref{prop:cellularnew}.  We only need to show that condition (2) in \autoref{prop:cellularnew} 
can be extended from generating cofibrations to all cofibrations. This is a direct consequence of the following lemma, 
using the fact that cofibrations are retracts of transfinite compositions of pushouts of generating cofibrations.

\begin{lem}\label{gencof}
 Let $F\colon\mathcal{C} \to \mathcal{D}$ be a functor between cocomplete categories and $I$ a set of morphisms of $\mathcal{C}$. Let $J$ be the collection of all retracts of transfinite compositions of pushouts of morphisms in $I$. If $F$ preserves
filtered colimits along morphisms in $J$ and pushouts along morphisms in $I$ then $F$ preserves pushouts along morphisms in $J$.
\end{lem}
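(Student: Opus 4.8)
The plan is to fix the class $K$ of all morphisms $k$ in $\mathcal{C}$ along which $F$ preserves pushouts, and to prove $J\subseteq K$, which is exactly the conclusion. Since $J$ is built from $I$ by cobase change, transfinite composition, and retracts, it suffices to check that $K$ contains $I$ (this is the hypothesis) and is closed under each of these three operations. I would organize the argument accordingly into three steps.

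First I would handle cobase change, which is purely formal. If $F$ preserves pushouts along $k$ and $k'$ is a pushout of $k$, then any pushout along $k'$ can be pasted onto the defining pushout square of $k'$: the outer rectangle is a pushout along $k$, so $F$ sends it to a pushout, and $F$ also sends the defining square to a pushout, whence the pasting lemma forces $F$ of the remaining square to be a pushout. Combined with $I\subseteq K$, this already shows that $F$ preserves pushouts along every pushout of a morphism in $I$.

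Next, and this is the technical heart, I would treat transfinite composition to conclude that $F$ preserves pushouts along every $j'$ in $\mathrm{cell}(I)$, the class of transfinite composites of pushouts of morphisms of $I$. Given a pushout of $j'\colon A_0\to A_\lambda$ along a map $A_0\to X$, I would build the usual ladder: a $\lambda$-indexed sequence $Y_0=X\to Y_1\to\cdots$ in which each square
\[\xymatrix{A_\alpha \ar[r]\ar[d] & A_{\alpha+1}\ar[d]\\ Y_\alpha \ar[r]& Y_{\alpha+1}}\]
is a pushout and whose colimit square is the pushout along $j'$. Each rung $Y_\alpha\to Y_{\alpha+1}$ is a pushout of a morphism in $I$, so the top and bottom rows are $\lambda$-chains lying in $\mathrm{cell}(I)\subseteq J$; since a $\lambda$-chain is a filtered diagram, the filtered-colimit hypothesis lets $F$ commute with both row colimits, while the previous step makes $F$ of each rung-square a pushout. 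Because pushouts commute with the sequential colimit, an induction on $\alpha$, with the limit stages handled by the preserved filtered colimits, identifies $F(Y_0)\sqcup_{F(A_0)}F(A_\alpha)$ with $F(Y_\alpha)$; passing to the colimit exhibits $F$ of the total square as a pushout.

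Finally I would close $K$ under retracts. If $j$ is a retract of $j'\in\mathrm{cell}(I)$ in the arrow category, then for any pushout along $j$ a short universal-property argument produces a retraction of its pushout square onto the pushout square along $j'$ obtained by precomposing with the retraction on sources. Applying $F$ and using that $F$ preserves pushouts along $j'$, the $F$-image of the $j$-square is a retract of a pushout square, hence itself a pushout since the property of being a pushout square is retract-closed. This yields $J\subseteq K$ and proves the lemma. The main obstacle I anticipate is the bookkeeping in the transfinite step, namely confirming that every arrow of the induced ladder genuinely lands in $J$ so that the filtered-colimit hypothesis applies, and organizing the successor and limit stages of the induction cleanly, rather than any single conceptually hard point.
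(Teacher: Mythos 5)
Your proposal is correct and follows essentially the same route as the paper's proof: the pasting (cancellation) argument for pushouts of morphisms in $I$, a transfinite induction whose limit stages interchange colimits and invoke the filtered-colimit hypothesis (noting the ladder's arrows lie in $J$), and closure under retracts via the fact that a retract of a pushout square is a pushout. The only cosmetic difference is that you run the induction along the stages $\alpha$ of a fixed ladder while the paper inducts on the length $\lambda$ of the transfinite composition, but these are the same induction.
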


\begin{proof}
  Suppose we have a diagram 
  \[\xymatrix{
   A \ar[r]\ar[d]^{i} & C\ar[r]\ar[d]^{i'} & C'\ar[d]\\
   B\ar[r] & D\ar[r] & D'
  }\]
  where both small squares are pushouts and $i$ is in $I$.  Then the exterior is a pushout.
  Applying $F$ we see that both the left square and the outside rectangle remain pushouts.  This 
  implies the right square is a pushout, so $F$ preserves pushouts along pushouts of morphisms in $I$.

  Now suppose we have a $\lambda$-sequence $X\colon \lambda \to \mathcal{C}$ for some ordinal 
  $\lambda$ so that the morphisms $X_i\to X_{i+1}$  are pushouts of morphisms in $I$.  
This implies that $F$ preserves pushouts along all of the morphisms $X_i\to X_{i+1}$.
We will show that $F$ preserves pushouts along the transfinite composition $X_0\to \colim _{\lambda} X_{\beta}$. 
  
  We proceed by transfinite induction. Assume the claim is already proven for all ordinals smaller than $\lambda$.  
  Recall that for any limit ordinal $\beta<\lambda$, the induced map $\colim_{i<\beta} X_i \to X_{\beta}$ is an isomorphism 
  by the definition of a $\lambda$-sequence. Note furthermore that for a non-limit ordinal, say, $\beta+1$, 
  the indexing category $i<\beta+1$ has the terminal object $\beta$, so $\colim_{i<\beta+1} X_i \to X_{\beta}$ is an isomorphism. 

  Assume first  that $\lambda=\beta+1$ is not a limit ordinal. Then we have a diagram of pushouts   
  \[\xymatrix{
   X_0 \ar[r]\ar[d] & \colim_{i<\beta} X_i \ar[r]\ar[d]& \colim_{i<\beta+1} X_i\ar[d]\\
   C\ar[r] & D\ar[r] & D'.
   }\]
  Since the map $\colim_{i<\beta} X_i \to \colim_{i<\beta+1} X_i$ is either an isomorphism or the given map $X_{\beta-1}\to X_{\beta}$ 
  (where $\beta-1$ denotes the predecessor of $\beta$ in this case), the functor $F$ preserves the smaller pushouts squares 
  (for the left one, we use the induction hypothesis), and thus also the outer pushout rectangle.

  Now assume that $\lambda$ is a limit ordinal. Since colimits commute with each other we have 
  \[\colim(C\leftarrow X_0\rightarrow \colim_{\lambda} X)\cong  \colim_{\lambda}\colim(C\leftarrow X_0\rightarrow X_{\beta}).\]
  Note that the morphisms in the filtered colimit on the right are maps in $J$ since  they are pushouts of composites of maps in the diagram $X$.
  Using this observation and the assumption that $F$ commutes with filtered colimits along morphisms in $J$, we obtain
  \[F(\colim(C\leftarrow X_0\rightarrow \colim_{\lambda} X))\cong  \colim_{\lambda}F(\colim(C\leftarrow X_0\rightarrow X_{\beta})).\]
  The induction hypothesis allows us to replace the right hand side by 
  \[\colim_{\lambda}\colim(FC\leftarrow FX_0\rightarrow FX_{\beta})\] and we can exchange the colimits to replace the colimit above by
  \[\colim\colim_{\lambda}(FC\leftarrow FX_0\rightarrow FX_{\beta})\cong  \colim(FC\leftarrow FX_0\rightarrow \colim_{\lambda}FX_{\beta}).\]
  Finally we observe that $F$ preserves filtered colimits along morphisms in $J$  to see that  
  \[\colim(FC\leftarrow FX_0\rightarrow \colim_{\lambda}FX_{\beta})\cong \colim(FC\leftarrow FX_0\rightarrow F\colim_{\lambda}X_{\beta}).\]

  For the condition on retracts, suppose that $i'\colon B\to B'$ is a retract of a map $i\colon A\to A'$, and $F$ preserves pushouts along $i$.  Then we have a diagram 
  \[\xymatrix{
  B\ar[r]\ar[d]^{i'}&A\ar[r]\ar[d]^i&B\ar[d]^{i'}
  \\
  B'\ar[r]&A'\ar[r]&B'  
  }\]
  where both horizontal composites are the identity.  If we take the pushout of this diagram along a map $f\colon B\to C$ we obtain
  pushout squares 
  \[\xymatrix{B\ar[r]\ar[d]^{i'}&C\ar[d]&A\ar[r]\ar[d]^i&C\ar[d]
  \\
  B'\ar[r]&Q & A'\ar[r]&P
  }\]
  and the left hand square is a retract of the right hand square.  Applying $F$ to both squares preserves the retraction and the right pushout square.
  Since a retract of a pushout square is a pushout, $F$ applied to the left pushout square is a pushout.
\end{proof}

We also use this lemma in the proof of \autoref{prop:left}.

\begin{proof}[Proof of \autoref{prop:left}]
  First observe that since $\cC$ is left proper   $\oc$ is also left proper \cite[Theorem 13.1.14]{hirschhorn}.

  Since $\cC$ is cofibrantly generated both $\oc$ and $\gc$ are cofibrantly generated and generating cofibrations for both $\gc$ and $\oc$ can be 
  defined in terms of the generating cofibrations of $\cC$.  In fact, we can choose generating cofibrations $I$ for $\gc$ so that 
  $\Phi I$ is a collection of generating cofibrations for $\oc$ \cite{stephanthesis}.  
  Since $\Phi$ preserves retracts, filtered colimits, and pushouts along generating cofibrations, we see that $\Phi$ preserves cofibrations.

 By assumption, the fixed point functor $(-)^H$ preserves pushouts along generating cofibrations in $\gc$, so by \autoref{gencof}, it also preserves pushouts along all cofibrations.  It follows that the functor $\Phi$ also preserves pushouts along cofibrations.

  Consider a pushout diagram 
  \[\xymatrix{
   X \ar[r]^{h}\ar[d]_{f} & Y \ar[d]^{f'}\\
    Z \ar[r] & W
   }\]
  in $\gc$ where $f$ is a weak equivalence and $h$ is a cofibration. Applying $\Phi$ we have a pushout diagram in $\oc$ and by construction of the 
  model structure on $\gc$, $\Phi(f)$ is a weak equivalence.  By the observations above $\Phi(h)$ is a cofibration.  
  It follows that $\Phi(f')$ is a weak equivalence in $\oc$, so by definition $f'$ is a weak equivalence in $\gc$.
\end{proof}

\bibliographystyle{amsalpha}
\bibliography{ref}

\providecommand{\bysame}{\leavevmode\hbox to3em{\hrulefill}\thinspace}
\providecommand{\MR}{\relax\ifhmode\unskip\space\fi MR }
\providecommand{\MRhref}[2]{%
  \href{http://www.ams.org/mathscinet-getitem?mr=#1}{#2}
}
\providecommand{\href}[2]{#2}
\begin{thebibliography}{MM02}

\bibitem[Cis99]{cisinski}
Denis-Charles Cisinski, \emph{La classe des morphismes de {D}wyer n'est pas
  stable par retractes}, Cahiers Topologie G\'eom. Diff\'erentielle Cat\'eg.
  \textbf{40} (1999), no.~3, 227--231. \MR{1716777 (2000j:18008)}

\bibitem[Elm83]{elmendorf}
A.~D. Elmendorf, \emph{Systems of fixed point sets}, Trans. Amer. Math. Soc.
  \textbf{277} (1983), no.~1, 275--284. \MR{690052 (84f:57029)}

\bibitem[FL81]{FL}
Rudolf Fritsch and Dana~May Latch, \emph{Homotopy inverses for nerve}, Math. Z.
  \textbf{177} (1981), no.~2, 147--179. \MR{612870 (82m:18010)}

\bibitem[FP10]{FP}
Thomas~M. Fiore and Simona Paoli, \emph{A {T}homason model structure on the
  category of small {$n$}-fold categories}, Algebr. Geom. Topol. \textbf{10}
  (2010), no.~4, 1933--2008.

\bibitem[Hir03]{hirschhorn}
Philip~S. Hirschhorn, \emph{Model categories and their localizations},
  Mathematical Surveys and Monographs, vol.~99, American Mathematical Society,
  Providence, RI, 2003. \MR{1944041 (2003j:18018)}

\bibitem[Hov99]{hovey}
Mark Hovey, \emph{Model categories}, Mathematical Surveys and Monographs,
  vol.~63, American Mathematical Society, Providence, RI, 1999. \MR{1650134
  (99h:55031)}

\bibitem[Lac]{lack}
Steve Lack, \emph{Email regarding classifying functor and colimits}, Category
  theory list,
  http://permalink.gmane.org/gmane.science.mathematics.categories/3407.

\bibitem[MM02]{MM}
M.~A. Mandell and J.~P. May, \emph{Equivariant orthogonal spectra and
  {$S$}-modules}, Mem. Amer. Math. Soc. \textbf{159} (2002), no.~755, x+108.
  \MR{1922205 (2003i:55012)}

\bibitem[Pia91]{piacenza}
Robert~J. Piacenza, \emph{Homotopy theory of diagrams and {CW}-complexes over a
  category}, Canad. J. Math. \textbf{43} (1991), no.~4, 814--824. \MR{1127031
  (92i:55018)}

\bibitem[Ste13]{stephanthesis}
Marc Stephan, \emph{On equivariant homotopy theory for model categories},
  arXiv:1308.0856, 2013.

\bibitem[Tho80]{thomason}
R.~W. Thomason, \emph{Cat as a closed model category}, Cahiers Topologie
  G\'eom. Diff\'erentielle \textbf{21} (1980), no.~3, 305--324. \MR{591388
  (82b:18005)}

\end{thebibliography}

\end{document}